\newcommand{\axiom}[1]{\mathsf{#1}}
\newcommand{\ZFA}{\axiom{ZFA}}
\newcommand{\ZF}{\axiom{ZF}}
\newcommand{\HS}{\axiom{HS}}
\newcommand{\DC}{\axiom{DC}}
\DeclareMathOperator{\sym}{sym}
\DeclareMathOperator{\fix}{fix}
\DeclareMathOperator{\tcl}{tcl}
\DeclareMathOperator{\id}{id}
\theoremstyle{plain}
\newtheorem{thm}{Theorem}
\newtheorem{lemma}[thm]{Lemma}
\newtheorem{prop}[thm]{Proposition}
\newtheorem*{cor}{Corollary}
\newtheorem{claim}{Claim}
\newtheorem*{mainthm}{Main Theorem}
\theoremstyle{definition}
\newtheorem{definition}[thm]{Definition}
\newtheorem{qstn}[thm]{Question}
\author{Asaf Karagila}
\author{Jonathan Schilhan}
\email{karagila@math.huji.ac.il}
\urladdr{http://karagila.org}
\email{j.schilhan@leeds.ac.uk}
\urladdr{http://www.logic.univie.ac.at/~schilhanj/}
\address{School of Mathematics,
    University of Leeds.
    Leeds, LS2~9JT, UK}
\thanks{The authors were supported by a UKRI Future Leaders Fellowship [MR/T021705/2]. No data are associated with this article.}
\date{\today}
\subjclass[2020]{Primary 03E25; Secondary 03E35}
\keywords{permutation models, dependent choice, shift completeness}
\title{Geometric condition for Dependent Choice}
\begin{document}

\begin{abstract}
We provide a geometric condition which characterises when the Principle of Dependent Choice holds in a Fraenkel--Mostowski--Specker permutation model. This condition is a slight weakening of requiring the filter of groups to be closed under countable intersections. We show that this condition holds nontrivially in a new permutation model we call ``the nowhere dense model'' and we study its extensions to uncountable cardinals as well.
\end{abstract}

\maketitle

\section{Introduction}
Permutation models are models of set theory with atoms.\footnote{Traditionally either by weakening Extensionality (to allow ``multiple empty sets'') or Regularity (to allow Quine atoms, i.e.\ $x=\{x\}$).} These models are used often to prove basic independence results related to the Axiom of Choice. The technique was originally developed by Fraenkel, improved upon by Mostowski by introducing the concept of supports, and the modern presentation is due to Specker using filters of subgroups.\footnote{For a more detailed historical review see the remarks at the end of Chapter~4 in \cite{Jech:AC}.}

With these models it is relatively easy to construct models where the Axiom of Choice fails, and using a myriad of transfer theorems we can translate some of these results to the context of Zermelo--Fraenkel. Regardless of their intended use, permutation models are still an interesting way to exhibit the tension between symmetries and definablity on the one hand and the Axiom of Choice on the other.

We rarely want to violate the Axiom of Choice so much that it is become completely useless. Indeed, often times the goal is to preserve a fragment of the Axiom of Choice. One of the important fragments is the \emph{Principle of Dependent Choice} ($\DC$), which has many equivalents and reformulations throughout mathematics (e.g.\ the Baire Category Theorem), and can be succinctly stated as ``Every tree without maximal nodes has an infinite chain''.

Reading the standard proof separating Dependent Choice from other principles (see Chapter~8 of \cite{Jech:AC}) it is easy to conjecture a permutation model satisfies $\DC$ if and only if the filter of subgroups is countably complete. In this work we show that this is not quite the right notion, and we can in fact weaken the completeness to what we call ``shift completeness'' of the filter.
\section{Preliminaries}
Let us recall the basics of permutation models, following \cite{Jech:AC}. The underlying theory we are working with is $\ZFA$, i.e.\ $\ZF$ together with atoms. For simplicity we will also always assume that the collection of atoms is a set $A$. Models of this theory have a natural von Neumann like hierarchy where instead of starting with the empty set, we start with the set of atoms $A$ and then successively build powersets and unions at limit steps. Whenever $\pi$ is a permutation of $A$, $\pi$ recursiveley extends via this hierarchy to the whole universe by stipulating that $$\pi(x) = \{ \pi(y) : y \in x \}.$$

Now let $\mathcal{G}$ be a group of permutations of $A$. A \emph{filter of subgroups} of $\mathcal{G}$ is a non-empty set $\mathcal{F}$ of subgroups of $\mathcal{G}$ that is closed under supergroups and finite intersections. It is called \emph{normal} if it is also closed under conjugation by elements of $\mathcal{G}$, i.e.\ for every $H \in \mathcal{F}$ and $\pi \in \mathcal{G}$, $\pi H \pi^{-1} \in \mathcal{F}$. We say that $\mathcal{F}$ is $\kappa$-complete, for a cardinal $\kappa$, if whenever $\gamma<\kappa$ and $\{H_\alpha\mid\alpha<\gamma\}\subseteq\mathcal{F}$, then $\bigcap_{\alpha<\gamma}H_\alpha\in\mathcal{F}$.

In the context of some fixed $\mathcal{G}$ and a normal filter $\mathcal{F}$ of subroups of $\mathcal{G}$, we say that a set $x$ is \emph{symmetric} if there is a group $H \in \mathcal{F}$ such that $\pi(x) = x$ for every $\pi \in H$. In other words, $\sym(x) := \{ \pi \in \mathcal{G} : \pi(x) = x \} \in \mathcal{F}$. Moreover we say that $x$ is \emph{hereditarily symmetric} if every element of the transitive closure $\tcl(\{x\} \cup x)$ is symmetric. The class of hereditarily symmetric sets is then denoted $\HS$.

\begin{lemma}[Theorem~4.1 in {\cite{Jech:AC}}]
$\HS \subseteq V$ is a transitive model of $\ZFA$.
\end{lemma}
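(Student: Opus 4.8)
The plan is to verify, in turn, that $\HS$ is transitive, that it is invariant under the action of $\mathcal{G}$, and finally that each axiom of $\ZFA$ holds in it. Transitivity is immediate: if $x \in \HS$ and $y \in x$, then $\tcl(\{y\} \cup y) \subseteq \tcl(\{x\} \cup x)$, so every member of the former is symmetric and $y \in \HS$. The two tools that drive everything else are, first, the algebraic identity $\sym(\pi(x)) = \pi\,\sym(x)\,\pi^{-1}$ for every $\pi \in \mathcal{G}$ (verified directly, since $\sigma(\pi(x)) = \pi(x)$ iff $\pi^{-1}\sigma\pi \in \sym(x)$), which together with normality of $\mathcal{F}$ shows that $x \in \HS$ implies $\pi(x) \in \HS$; and second, the consequence that each $\pi$ restricts to an $\in$-automorphism of the transitive class $\HS$, so that $\HS \models \varphi(\bar a) \iff \HS \models \varphi(\pi(\bar a))$ for every formula $\varphi$ and every tuple $\bar a$ from $\HS$.

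With these in hand, the local axioms are cheap. Extensionality holds because $\HS$ is transitive, and Foundation is inherited from $V$. For Pairing, Union and Power Set I would exhibit a witnessing subgroup inside the relevant support: $\sym(x) \cap \sym(y) \subseteq \sym(\{x,y\})$, then $\sym(x) \subseteq \sym(\bigcup x)$, and $\sym(x) \subseteq \sym(\mathcal{P}(x) \cap \HS)$ (using $\mathcal{G}$-invariance to see that $\pi$ permutes the symmetric subsets of $x$ when $\pi(x)=x$); closure of $\mathcal{F}$ under finite intersections and supergroups then places each resulting object in $\HS$, its elements already lying in $\HS$ by transitivity or by construction. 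Infinity, and more generally the presence of all ordinals and pure sets, follows since such objects are fixed pointwise by all of $\mathcal{G} \in \mathcal{F}$; and the set of atoms $A$ is symmetric because $\pi(A) = A$ for every $\pi$, the atoms of the model being exactly the symmetric atoms.

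The substantive axioms are Separation and Replacement, and here the automorphism principle from the first paragraph is decisive. Given $x \in \HS$, a formula $\varphi$, and parameters $\bar p = p_1, \dots, p_n$ from $\HS$, set $s = \{ y \in x : \HS \models \varphi(y, \bar p) \}$, which is a set in $V$ by Separation there and a subset of $x$ all of whose members lie in $\HS$. If $\pi \in \sym(x) \cap \sym(p_1) \cap \dots \cap \sym(p_n)$, then for every $y$ we have $y \in s \iff \pi(y) \in s$ (using that $\pi$ fixes $x$ setwise and each $p_i$), whence $\pi(s) = s$; thus $\sym(x) \cap \bigcap_i \sym(p_i) \subseteq \sym(s)$ and $s \in \HS$. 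Replacement is handled identically: if $\pi$ fixes the domain and the parameters defining a class function, it permutes the image setwise, so the image (a set in $V$ whose members all lie in $\HS$) is symmetric and hence in $\HS$.

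The main obstacle, and really the conceptual heart of the argument, is establishing the symmetry identity and, through it, that every $\pi \in \mathcal{G}$ sends $\HS$ onto $\HS$ and preserves satisfaction relativized to $\HS$; once this is secured, the closure axioms reduce to locating a witnessing subgroup inside a support, and membership of the constructed objects' elements in $\HS$ is guaranteed by transitivity. The only remaining point requiring care is keeping track of which atoms are symmetric, so that the model's collection of atoms is correctly identified.
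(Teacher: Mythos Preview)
The paper does not supply its own proof of this lemma; it simply cites Theorem~4.1 of Jech's book. Your argument is correct and is essentially the standard proof given there: establish transitivity, prove the conjugation identity $\sym(\pi(x)) = \pi\,\sym(x)\,\pi^{-1}$ to get $\mathcal{G}$-invariance of $\HS$, and then verify each axiom by exhibiting a large subgroup witnessing symmetry, with Separation and Replacement handled via the induced automorphism of $(\HS,\in)$.
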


We will say that $\HS$ is the \emph{permutation model} obtained from $\mathcal{G}$ and $\mathcal{F}$.

\begin{definition}
\label{def:stratclosed}
Let $\mathcal{F}$ be a normal filter of subgroups of $\mathcal{G}$. We say that $\mathcal{F}$ is \emph{shift complete} if for any decreasing sequence $\langle H_n : n \in \omega \rangle$ in $\mathcal{F}$, there are $\pi_n \in \mathcal{G}$, for $n \in \omega$, such that for every $n \in \omega$, $\pi_n \in K_n$ and $\bigcap_{n \in \omega} K_n \in \mathcal{F}$, where $K_n$ is defined as $(\pi_{n -1 } \circ \dots \circ \pi_0) H_n (\pi_0^{-1} \circ \dots \circ \pi_{n-1}^{-1})$.
\end{definition}
To improve the readability, we will always use $K_n$ to denote the shifted sequence and we will use $\sigma_n$ to denote the composition $\pi_{n-1}\circ\dots\circ\pi_0$. Assuming $\DC$, it is true that $\mathcal{F}$ is shift complete if and only if it has a basis which is shift complete (requiring that $\bigcap_{n\in\omega}K_n$ contains a basis element, in this case). This is similar to the case of normality, although we need to rely on a modicum of choice since we need to choose infinitely many basis elements at once.

\begin{definition}
Let $A$ be a collection of atoms, let $\mathcal{G}$ be group of permutations of $A$ and $\mathcal{F}$ be a normal filter of subgroups of $\mathcal{G}$. Then the \emph{essential subfilter} of $\mathcal{F}$ is the filter generated by $\{ \sym(x) : x \in \HS \}$.
\end{definition}

It is easy to see that the essential subfilter is itself normal and produces the same permutation model, and in typical constructions $\mathcal{F}$ is itself already essential.
\begin{prop}
Assuming $\DC$ holds in $V$, if $\mathcal{F}$ is shift complete, then the essential subfilter is shift complete.
\end{prop}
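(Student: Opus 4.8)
The plan is to reduce shift completeness of the essential subfilter $\mathcal{E}$ to shift completeness of $\mathcal{F}$, by first replacing an arbitrary decreasing sequence in $\mathcal{E}$ with one built from symmetry groups of hereditarily symmetric sets, and then repackaging the output of $\mathcal{F}$'s shift completeness as a single element of $\HS$.

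First I would take a decreasing sequence $\langle H_n : n \in \omega\rangle$ in $\mathcal{E}$. Since $\mathcal{E}$ is generated by $\{\sym(x) : x \in \HS\}$ and $\sym(\langle a,b\rangle) = \sym(a) \cap \sym(b)$, each $H_n$ contains $\sym(z_n)$ for some single $z_n \in \HS$; here I would invoke $\DC$ (in fact only countable choice) to pick the witnesses $z_n$ simultaneously. Passing to the tuples $y_n := \langle z_0, \dots, z_n\rangle \in \HS$ makes the groups $G_n := \sym(y_n) = \bigcap_{k \le n}\sym(z_k)$ decreasing while keeping $G_n \subseteq H_n$. Because $\mathcal{E} \subseteq \mathcal{F}$, the sequence $\langle G_n : n \in \omega\rangle$ lies in $\mathcal{F}$, so I can apply shift completeness of $\mathcal{F}$ to obtain permutations $\pi_n \in \mathcal{G}$ with $\pi_n \in \sigma_n G_n \sigma_n^{-1}$ and $\bigcap_n \sigma_n G_n \sigma_n^{-1} \in \mathcal{F}$.

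The crux is to upgrade this intersection from $\mathcal{F}$ to $\mathcal{E}$. Here I would use that $\sigma_n G_n \sigma_n^{-1} = \sym(\sigma_n(y_n))$ and that $\HS$ is $\mathcal{G}$-invariant (since $\sym(\pi(x)) = \pi\sym(x)\pi^{-1}$ and normality preserves membership in $\mathcal{F}$), so each $\sigma_n(y_n) \in \HS$. Forming the single object $w := \langle \sigma_n(y_n) : n \in \omega\rangle$, a direct computation using $\pi(n)=n$ gives $\sym(w) = \bigcap_n \sym(\sigma_n(y_n)) = \bigcap_n \sigma_n G_n \sigma_n^{-1}$. Shift completeness of $\mathcal{F}$ says precisely that this group lies in $\mathcal{F}$, which is exactly the statement that $w$ is symmetric; since all elements of $w$, together with their transitive closures, are already in $\HS$, this yields $w \in \HS$ and hence $\sym(w) \in \mathcal{E}$.

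Finally I would check that the same $\pi_n$ witness shift completeness for the original sequence $\langle H_n : n \in \omega\rangle$. As $G_n \subseteq H_n$ we have $\sigma_n G_n \sigma_n^{-1} \subseteq K_n$, so $\pi_n \in K_n$, and $\bigcap_n K_n \supseteq \sym(w) \in \mathcal{E}$; upward closure of $\mathcal{E}$ then gives $\bigcap_n K_n \in \mathcal{E}$. I expect the main obstacle to be exactly this $\mathcal{F}$-to-$\mathcal{E}$ upgrade: the naive hope that $\bigcap_n K_n \in \mathcal{F}$ already lies in $\mathcal{E}$ fails, because filters are closed upward rather than downward. The whole point is that the witnessing intersection can be realised as $\sym(w)$ for a genuinely hereditarily symmetric $w$, which is what converts mere membership in $\mathcal{F}$ into a generator of $\mathcal{E}$.
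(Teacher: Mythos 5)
Your proposal is correct and follows essentially the same route as the paper: apply shift completeness of $\mathcal{F}$ to the symmetry groups of hereditarily symmetric witnesses, shift the witnesses by $\sigma_n$, and package them into a single sequence in $\HS$ whose symmetry group realises the intersection as a generator of the essential subfilter. You are in fact slightly more careful than the paper, which assumes each $H_n$ \emph{equals} some $\sym(x_n)$, whereas you handle the general case where $H_n$ merely contains a generator by passing to cumulative tuples and invoking upward closure at the end.
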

\begin{proof}
  Let $\langle H_n : n\in\omega\rangle$ be a sequence in the essential subfilter, and choose $x_n\in\HS$ such that $\sym(x_n)=H_n$ for every $n\in\omega$. Using the shift completeness of $\mathcal{F}$, there is $\langle\pi_n: n\in\omega\rangle$ such that $\bigcap_{n\in\omega}K_n\in\mathcal{F}$.

  Define $y_n=\sigma_n(x_n)$, then we have that $\sym(y_n)=\sigma_n\sym(x_n)\sigma_n^{-1}=K_n$. Therefore $\sym(\langle y_n:n\in\omega\rangle) = \bigcap_{n \in \omega} K_n$, $\langle y_n: n\in\omega\rangle\in\HS$, and $\bigcap_{n \in \omega} K_n$ is in the essential subfilter, as wanted.
\end{proof}

\section{Dependent Choice is Essentially shift completeness}
\begin{mainthm}\label{thm:main}
Assume $\DC$ holds in $V$ and let $\mathcal{F}$ be a normal filter of subgroups of $\mathcal{G}$. Then the following are equivalent:
\begin{enumerate}
    \item The essential subfilter of $\mathcal{F}$ is shift complete.
    \item $\HS \models \DC$.
\end{enumerate}
\end{mainthm}

\begin{proof}
(1) implies (2): Let $(T, \leq) \in \HS$ be a tree without a maximal element. Since $\DC$ holds in $V$, there is an increasing sequence $\langle s_n : n \in \omega \rangle$ in $T$. Let $H_0 := \sym(T,\leq)$ and $H_{n+1} := \sym(T,\leq, \langle s_i : i \leq n \rangle)$ for every $n \in \omega$. Then each $H_n$ is in the essential subfilter of $\mathcal{F}$ and they form a decreasing sequence. According to (1), there are $\pi_n \in \mathcal{G}$ such that $K := \bigcap_{n \in \omega} K_n\in\mathcal{F}$. Further let $t_n := \sigma_{n}(s_n)$, for every $n \in \omega$. Then  $t_n \in \HS$ and \begin{align*}
    \sym(t_n) &= \sym(\sigma_{n}(s_n)) \\ &= \sigma_{n} \sym(s_n) \sigma_{n}^{-1} \\ &\supseteq \sigma_{n} H_{n+1} \sigma_{n}^{-1} \\ &= K_{n+1} \supseteq K.
\end{align*}  Moreover, as $(s_n, s_{n+1}) \in {\leq}$ and $\sigma_{n+1} \in H_0 = \sym(\leq)$, we have that \begin{align*}
    \sigma_{n+1}(s_n, s_{n+1}) &= (\sigma_{n+1}(s_n), \sigma_{n+1}(s_{n+1})) \\ &= (\pi_{n+1} \circ \sigma_{n}(s_{n}), \sigma_{n+1}(s_{n+1})) \\ &= (\pi_{n+1}(t_n), t_{n+1}) \\ &= (t_n, t_{n+1}) \in {\leq}.
\end{align*}

The last equality follows since $\pi_{n+1} \in K_{n+1} \subseteq \sym(t_n)$. Thus $\langle t_n : n \in \omega \rangle$ is increasing in $T$ and $\langle t_n : n \in \omega \rangle \in \HS$.

(2) implies (1): Let $\langle H_n : n \in \omega \rangle$ be in the essential subfilter. Using $\DC$ in $V$, for every $n \in \omega$, let $x_n \in \HS$ be such that, without loss of generality, $\sym(x_n)=H_{n+1}$.

Define $T := \{ \pi(\langle x_i : i < n \rangle) : \pi \in H_0, n \in \omega \}$. Since $H_0 \subseteq \sym(T)$ we have that $T\in\HS$, and as a tree ordered by inclusion $T$ has no maximal elements. Thus, by (2), there is a branch $\langle t_n : n \in \omega \rangle \in \HS$ in $T$. Again applying $\DC$ in $V$, we find $\tau_{n}$ such that $\langle t_i : i \leq n \rangle = \tau_{n}(\langle x_i : i \leq n \rangle)$ for every $n$. Let $\pi_0 = \tau_0$ and for each $n \geq 1$, let $\pi_n = \tau_n \circ \tau_{n-1}^{-1}$ and observe that $\pi_n \circ \dots \circ \pi_0 = \tau_n$. We claim that using $\langle \pi_n : n \in \omega\rangle$ is a shifting sequence for $\langle H_n : n\in\omega\rangle$. We let $K_0 = H_0$ and $K_n := \tau_{n-1} H_n \tau_{n-1}^{-1}$, in anticipation that $\bigcap_{n\in\omega}K_n\in\mathcal{F}$.

\begin{claim}
For every $n \in \omega$, $\pi_n \in K_n$.
\end{claim}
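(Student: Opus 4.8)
The plan is to verify the membership $\pi_n \in K_n$ separately for the base case $n=0$ and the inductive case $n\geq 1$, in both cases unwinding the definitions $\pi_0=\tau_0$, $\pi_n=\tau_n\circ\tau_{n-1}^{-1}$, and $K_n=\tau_{n-1}H_n\tau_{n-1}^{-1}$ (with $K_0=H_0$), and exploiting the fact that the nodes of $T$ are precisely the $H_0$-translates of the sequences $\langle x_i:i<m\rangle$. In particular, since each $\langle t_i:i\leq n\rangle$ is a node of $T$, when invoking $\DC$ to extract the witnesses $\tau_n$ we may and do choose them inside $H_0$; this is exactly what makes the base case work, as then $\pi_0=\tau_0\in H_0=K_0$.

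For the inductive case $n\geq 1$, I would first reduce the desired membership to a statement about a single permutation fixing a single set. Since $K_n=\tau_{n-1}H_n\tau_{n-1}^{-1}$, we have $\pi_n\in K_n$ if and only if $\tau_{n-1}^{-1}\circ\pi_n\circ\tau_{n-1}\in H_n$, and substituting $\pi_n=\tau_n\circ\tau_{n-1}^{-1}$ this conjugate collapses to $\tau_{n-1}^{-1}\circ\tau_n$. Recalling the index convention $\sym(x_n)=H_{n+1}$, i.e.\ $\sym(x_{n-1})=H_n$, it therefore suffices to show that $\tau_{n-1}^{-1}\circ\tau_n$ fixes $x_{n-1}$, equivalently that $\tau_n(x_{n-1})=\tau_{n-1}(x_{n-1})$.

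This last equality is precisely where the coherence of the branch enters. Reading the defining relation $\langle t_i:i\leq m\rangle=\tau_m(\langle x_i:i\leq m\rangle)$ coordinatewise gives $\tau_m(x_i)=t_i$ for all $i\leq m$, where the $t_i$ are the fixed coordinates of the single branch $\langle t_n:n\in\omega\rangle$. Applying this with $m=n$ and with $m=n-1$ to the common index $i=n-1$ yields $\tau_n(x_{n-1})=t_{n-1}=\tau_{n-1}(x_{n-1})$, so $\tau_{n-1}^{-1}\circ\tau_n\in\sym(x_{n-1})=H_n$, and hence $\pi_n\in K_n$ as desired.

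I expect the only real subtlety to be bookkeeping rather than any genuine obstacle: keeping the index shift $\sym(x_n)=H_{n+1}$ straight, so that the relevant symmetry group is $\sym(x_{n-1})$ and not $\sym(x_n)$, and recording explicitly that $\DC$ is applied so that the witnesses $\tau_n$ land in $H_0$ (needed for $n=0$). The conceptual content is simply that two permutations witnessing overlapping initial segments of the \emph{same} branch must agree on the shared coordinate, and this agreement is exactly the fixed-point statement that certifies $\pi_n\in K_n$.
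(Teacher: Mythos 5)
Your proof is correct and follows essentially the same route as the paper: both arguments reduce $\pi_n\in K_n$ to the observation that $\tau_n$ and $\tau_{n-1}$ send the common initial segment $\langle x_i:i<n\rangle$ to the same node $\langle t_i:i<n\rangle$ of the fixed branch. The only (cosmetic) difference is that you conjugate by $\tau_{n-1}^{-1}$ and check that $\tau_{n-1}^{-1}\circ\tau_n$ fixes $x_{n-1}$, while the paper checks directly that $\pi_n$ fixes $\tau_{n-1}(\langle x_i:i<n\rangle)$; your explicit remark that the witnesses $\tau_n$ can be taken in $H_0$ is a point the paper leaves implicit but does rely on for the case $n=0$.
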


\begin{proof}
For $n=0$ this is trivial, as $K_0 = H_0$ and $\pi_0 = \tau_0 \in H_0$. For $n \geq 1$, since $K_n = \tau_{n-1} H_n \tau_ {n-1}^{-1} \supseteq \sym(\tau_{n-1} (\langle x_i : i <n \rangle))$, it suffices to show that $\pi_n$ fixes $\tau_{n-1} (\langle x_i : i <n \rangle)$. This is true since \[\pi_n(\tau_{n-1}(\langle x_i : i <n \rangle))= \tau_n(\langle x_i : i <n \rangle) \subseteq \tau_n(\langle x_i : i \leq n \rangle) = \langle t_i : i \leq n \rangle.\] As $\tau_n(\langle x_i : i <n \rangle)$ has length $n$, \[\tau_n(\langle x_i : i <n \rangle) = \langle t_i : i < n \rangle = \tau_{n-1}(\langle x_i : i <n \rangle).\qedhere\]
\end{proof}

Let $K := \sym(\langle t_n : n \in \omega \rangle) \cap K_0$.

\begin{claim}
$K \subseteq \bigcap_{n \in \omega} K_n$.
\end{claim}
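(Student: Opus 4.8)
The plan is to verify the inclusion $K\subseteq K_n$ separately for each $n\in\omega$; since $\bigcap_{n\in\omega}K_n$ is simply the intersection, this suffices. The case $n=0$ is immediate, because $K=\sym(\langle t_n:n\in\omega\rangle)\cap K_0$ is by definition contained in $K_0$. So the work lies entirely in the case $n\geq 1$, and the strategy is to rewrite $K_n$ as the stabiliser of a single entry of the branch, namely to show $K_n=\sym(t_{n-1})$.

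To obtain this identification I would first recall that $\sym(x_{n-1})=H_n$ by the choice of the $x_i$, and then invoke the standard conjugation identity $\sym(\rho(y))=\rho\,\sym(y)\,\rho^{-1}$ already used in the proof of the Main Theorem. Applying it with $\rho=\tau_{n-1}$ and $y=x_{n-1}$ gives $K_n=\tau_{n-1}H_n\tau_{n-1}^{-1}=\tau_{n-1}\sym(x_{n-1})\tau_{n-1}^{-1}=\sym(\tau_{n-1}(x_{n-1}))$. The one genuine computation is that $\tau_{n-1}(x_{n-1})=t_{n-1}$: this follows from the defining property $\tau_{n-1}(\langle x_i:i\leq n-1 \rangle)=\langle t_i:i\leq n-1 \rangle$, together with the observation that a permutation of $A$ acts on a finite sequence coordinatewise, since the indices are natural numbers and hence pure sets fixed by every $\pi\in\mathcal{G}$. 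Thus $\tau_{n-1}(x_i)=t_i$ for all $i\leq n-1$, and in particular $K_n=\sym(t_{n-1})$.

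With $K_n=\sym(t_{n-1})$ in hand the inclusion is immediate: the very same coordinatewise-action observation shows that any $\rho$ fixing the whole sequence $\langle t_n:n\in\omega\rangle$ fixes each entry $t_{n-1}$, so $\sym(\langle t_n:n\in\omega\rangle)\subseteq\sym(t_{n-1})=K_n$, whence $K\subseteq K_n$. I expect the main (and essentially only) obstacle to be the bookkeeping in the previous paragraph, that is, correctly unwinding the conjugation $\tau_{n-1}H_n\tau_{n-1}^{-1}$ and keeping the indices aligned, rather than anything conceptually deep; once $K_n$ is recognised as $\sym(t_{n-1})$, the claim reduces to the tautology that the stabiliser of a sequence is contained in the stabiliser of each of its entries.
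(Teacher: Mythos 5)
Your proof is correct and takes essentially the same route as the paper's: both reduce membership in $K_n$ to the fact that $\pi$ fixes $\tau_{n-1}(x_{n-1})=t_{n-1}$ via the conjugation identity $\sym(\rho(y))=\rho\sym(y)\rho^{-1}$, and then use that the stabiliser of the branch $\langle t_n : n\in\omega\rangle$ is contained in the stabiliser of each entry. Your identification $K_n=\sym(t_{n-1})$ is merely a slightly sharpened form of the paper's containment $K_n\supseteq\sym(\tau_{n-1}(\langle x_i : i<n\rangle))$.
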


\begin{proof}
Let $\pi \in K$ and $n \in \omega$ be arbitrary. Then $\pi \in K_0$ and for $n\geq 1$, since $K_n = \tau_{n-1} H_n \tau_{n-1}^{-1} \supseteq \sym(\tau_{n-1} (\langle x_i : i <n \rangle))$, it suffices to show that $\pi$ fixes $\tau_{n-1} (x_{n-1})$. This is similar to the previous claim.
\end{proof}

Since $K$ is in the essential subfilter of $\mathcal{F}$, this completes the proof.
\end{proof}

\section{Nowhere dense model}
  We consider the rational numbers, $\mathbb{Q}$, with their linear order as our structure. The group $\mathcal{G}$ is the group of order automorphisms (i.e.\ order preserving bijections). For any subset $E\subseteq\mathbb{Q}$, we let $\fix(E) := \{\pi \in \mathcal{G} : \pi\restriction E=\id\}$. Let $\mathcal{F}$ be the filter generated by $\{ \fix(E) : E\subseteq\mathbb{Q} \text{ is nowhere dense} \}$. It is not hard to see that any singleton is nowhere dense, so the filter is certainly not countably complete.

  We claim that $\mathcal{F}$ is shift complete. Let $E_n$ be an increasing sequence of nowhere dense subsets of $(\mathbb{Q},<)$. Let $\langle I_n : n \in \omega \rangle$ enumerate all open intervals of $\mathbb{Q}$. We will recursively define automorphisms $\pi_n$ and non-empty intervals $J_n \subseteq I_n$ such that
  \begin{enumerate}
  \item $\pi_n \in \fix(\sigma_n`` E_n) =\sigma_n \fix(E_n)\sigma_n^{-1}$ and
  \item $\bigcup_{n \in \omega} \sigma_n`` E_n \cap \bigcup_{n \in \omega} J_n = \varnothing$.
  \end{enumerate}


  \begin{lemma}
    Let $E$ be a nowhere dense set and $J=\bigcup_{i<n}(a_i-\varepsilon_i,b_i+\varepsilon_i)$ be a disjoint union of open intervals for some $\varepsilon_i>0$. There is an automorphism, $\pi\in\fix(\mathbb{Q}\setminus J)$, and $\pi``E\cap(a_i-\frac{\varepsilon_i}4,b_i+\frac{\varepsilon_i}4)=\varnothing$ for $i<n$.
  \end{lemma}
  \begin{proof}
    For each $i<n$ we define an automorphism $\tau_i\in\fix(\mathbb{Q}\setminus(a_i-\varepsilon_i,b+\varepsilon_i))$ and take $\pi$ to be the composition of these automorphisms, since the intervals are disjoint this composition is commutative. For readability let us omit $i$ from the subscript, as we are working on each of the intervals separately.

    Since $E$ is nowhere dense, we can find $(c,d)\subseteq (a,b)$ such that $E\cap (c,d)=\varnothing$. Let $\tau$ be an automorphism of $(a-\varepsilon,b+\varepsilon)$ such that $\tau(a)=a-\frac\varepsilon2$, $\tau(c)=a-\frac\varepsilon4$, $\tau(d)=b+\frac\varepsilon4$, and $\tau(b)=b+\frac\varepsilon2$. Then $\tau``E\cap(a-\frac\varepsilon4,b+\frac\varepsilon4)=\varnothing$, as wanted.
  \end{proof}

  Pick $J_0 = (a_0,b_0)\subseteq I_0$ such that for some $\varepsilon_0 > 0$, $(a_0-\varepsilon_0,b_0+\varepsilon_0)\cap E_0=\varnothing$. Since $E_1$ is nowhere dense we can apply the lemma to obtain $\pi_0\in\fix(E_0)$ such that $\pi_0``E_1\cap(a_0-\frac{\varepsilon_0}4,b_0+\frac{\varepsilon_0}4)=\varnothing$.

  Suppose that we have defined $J_i=(a_i,b_i)$ and $\pi_i$ for $i<n$ such that for some $\varepsilon_i>0$ we have $(a_i-\varepsilon_i,b_i+\varepsilon_i)\cap\sigma_n``E_n=\varnothing$. It is important to note that the sequence of $\varepsilon_i$ may be taken to be different at each step. We want to find $J_n=(a_n,b_n)\subseteq I_n$ such that for some $\varepsilon_n$ we have $(a_n-\varepsilon_n,b_n+\varepsilon_n)$ is disjoint from $\sigma_n``E_n$ and the previously chosen intervals, by perhaps shrinking the $\varepsilon_i$ even more in order to apply the lemma to $J=\bigcup_{i\leq n}(a_i-\varepsilon_i,b_i+\varepsilon_i)$ and $\sigma_n``E_{n+1}$. Certainly, since $\sigma_n``E_n$ is nowhere dense that requirement is easy to fulfil; if we cannot fulfil the second requirement, then we can choose $J_n$ to be contained in one of the $J_i$ for $i<n$ and we can apply the lemma. This ensures that $\bigcup_{n\in\omega}\sigma_n``E_n\cap\bigcup_{n\in\omega}J_n=\varnothing$ as wanted.

\section{Generalised versions of Dependent Choice and shift completeness}

We can generalise $\DC$ to higher cardinals in the following way. We say that a tree $T$ is $\kappa$-closed if every chain of order type $<\kappa$ has an upper bound. Then $\DC_\kappa$ states that every $\kappa$-closed tree has a chain of order type $\kappa$ or a maximal element. It is not hard to verify, in this formulation, that if $\lambda<\kappa$, then $\DC_\kappa$ implies $\DC_\lambda$ holds as well. We write $\DC_{<\kappa}$ to denote $\DC_\lambda$ holds for all $\lambda<\kappa$. In the case where $\kappa=\lambda^+$ this is just $\DC_\lambda$, and if $\kappa$ is singular, then $\DC_{<\kappa}$ implies $\DC_\kappa$. However, for inaccessible cardinals $\DC_{<\kappa}$ is indeed weaker than $\DC_\kappa$.

\begin{definition}
 Let $\delta$ be an infinite ordinal. Then we say that $\mathcal{F}$ is \emph{$\delta$-shift complete} if for any sequence $\langle H_\alpha : \alpha < \gamma \rangle$ with $\gamma<\delta$ in $\mathcal{F}$, there are $\sigma_\alpha \in \mathcal{G}$, for $\alpha < \gamma$, such that
\begin{enumerate}
    \item  $\bigcap_{\alpha < \gamma} K_\alpha \in \mathcal{F}$, where $K_\alpha =\sigma_\alpha H_\alpha \sigma_\alpha^{-1}$,
    \item for every $\alpha < \beta < \gamma$, $\sigma_\beta \sigma_\alpha^{-1} \in K_\alpha$.
\end{enumerate}
\end{definition}
It is not hard to see that shift-complete as we previously defined is $\omega+1$-shift complete.

\begin{lemma}
   Let $\mathcal{F}$ be a filter of subgroups of $\mathcal{G}$. If $\mathcal{F}$ is $\gamma+1$-shift complete, then $\mathcal{F}$ is $|\gamma|$-complete. If $\mathcal{F}$ is $\gamma+2$-shift complete, then $\mathcal{F}$ is $|\gamma|^+$-complete.
\end{lemma}
\begin{proof}
  For simplicity, we assume that $\gamma=|\gamma|$. To see that $\mathcal{F}$ is $\gamma$-complete, let $\langle H_\alpha : \alpha < \beta \rangle$ be in $\mathcal{F}$, where $\beta < \gamma$. Next extend this sequence arbitrarily to $\langle H_\alpha : \alpha < \gamma \rangle$, for example by repeating $\mathcal{G}$ after $\beta$. Let $\sigma_\alpha$ and $K_\alpha$ be as in the definition of shift complete, for each $\alpha\leq\gamma$. Then we have that $\bigcap_{\alpha < \beta} K_\alpha \in \mathcal{F}$. On the other hand, for each $\alpha < \beta$, $\sigma_\beta \sigma_\alpha^{-1} \in K_\alpha$, so
\begin{align*}
    \bigcap_{\alpha < \beta} H_\alpha &=  \sigma_\beta^{-1} \left(\bigcap_{\alpha < \beta} \sigma_\beta H_\alpha \sigma_\beta^{-1} \right) \sigma_\beta \\ &= \sigma_\beta^{-1} \left(\bigcap_{\alpha < \beta} \sigma_\beta \sigma_\alpha^{-1} K_\alpha  \sigma_\alpha \sigma_\beta^{-1} \right) \sigma_\beta \\ &= \sigma_\beta^{-1} \left(\bigcap_{\alpha < \beta} K_\alpha \right) \sigma_\beta\in \mathcal{F}.
\end{align*}

The proof in the case of $\gamma+2$-shift completeness is similar, but we can now use a sequence of length $\gamma$ to begin with, thus proving that $\mathcal{F}$ is $\gamma^+$-complete.
\end{proof}
We saw with the nowhere dense model that the above theorem is the best we can get, since it is possible to get a filter of groups that is $\omega+1$-shift complete, but not $\omega_1$-complete.

It is a standard observation that if $\mathcal{F}$ is $\kappa$-complete and $\DC_{<\kappa}$ holds in $V$, then $\DC_{<\kappa}$ holds in $\HS$. The proof of this observation actually shows more, it shows that $\HS$ is closed under $\gamma$-sequences for any $\gamma<\kappa$. Indeed, if $X=\{x_\alpha\mid\alpha<\gamma\}\subseteq\HS$, then $\bigcap_{\alpha<\gamma}\sym(x_\alpha) = H\in\mathcal{F}$, and it is not hard to check that $H$ fixes $X$ pointwise, so $X\in\HS$.

\begin{thm}
  Assuming $\DC_\kappa$ holds in $V$, if $\mathcal{F}$ is $\kappa+1$-shift complete, then $\DC_\kappa$ holds in $\HS$.
\end{thm}
\begin{proof}
Suppose that $(T,\leq) \in \HS$ is a $\kappa$-closed tree with no maximal elements. We have that $T$ is $\kappa$-closed in $V$, since there is some $\gamma<\kappa$ and a chain in $T$ of order type $\gamma$ which is not in $\HS$. But by the lemma above, $\mathcal{F}$ is $\kappa$-complete, so this is impossible. Thus there is a branch $\langle s_\alpha : \alpha < \kappa \rangle \in V$ in $T$. Now we proceed exactly as in the proof of Theorem~\ref{thm:main} to get a shifted branch in $\HS$.
\end{proof}

\begin{cor}
  Assuming $\DC_{<\kappa}$ holds in $V$, if $\mathcal{F}$ is $\kappa$-shift complete, then $\DC_{<\kappa}$ holds in $\HS$.\qed
\end{cor}

\section{Open questions}
  From the work of Blass in \cite{Blass:1987} and \cite{Blass:2011} we know that there is a complete characterisation of when the Boolean Prime Ideal theorem holds in a permutation model. The property used by Blass is called a ``Ramsey filter'', but upon deeper inspection it seems to have a strong finitary nature which is at odds with the infinitary nature of shift completeness.
\begin{qstn}
  Is there a natural example of a permutation model where the filter which is both Ramsey and shift complete?
\end{qstn}

It seems somewhat unlikely that $\kappa+1$-shift completeness will be equivalent to $\DC_\kappa$ holding in $\HS$ for uncountable $\kappa$. It seems reasonable to expect that $\HS$ might not be closed under $\gamma$-sequences for all $\gamma<\kappa$, but $\DC_\kappa$ still holds there.
\begin{qstn}
  What is the ``correct'' generalisation of shift completeness which does not imply $\kappa$-completeness?
\end{qstn}
\vspace{1em}
\subsection*{Acknowledgements} The authors would like to thank the anonymous referee for their comments and suggestions.
\newpage
\providecommand{\bysame}{\leavevmode\hbox to3em{\hrulefill}\thinspace}
\providecommand{\MR}{\relax\ifhmode\unskip\space\fi MR }
\providecommand{\MRhref}[2]{%
  \href{http://www.ams.org/mathscinet-getitem?mr=#1}{#2}
}
\providecommand{\href}[2]{#2}

\end{document}